\documentclass[11pt]{amsart}

\usepackage[all,cmtip]{xy}
\usepackage{amsmath, amssymb}
\usepackage{mathtools}
\usepackage[T1]{fontenc}
\usepackage[english]{babel}
\usepackage[hidelinks]{hyperref}
\usepackage[marginpar=2.5cm]{geometry}
\usepackage{xcolor}\usepackage{marginnote}
\usepackage{amsrefs}
\usepackage{enumitem}
\usepackage{tikz-cd}

\newtheorem{thm}{Theorem}[section]

\newtheorem*{thm*}{Theorem}
\newtheorem{lem}[thm]{Lemma}

\newtheorem{prop}[thm]{Proposition}
\newtheorem*{prop*}{Proposition}

\newtheorem{cor}[thm]{Corollary}

\theoremstyle{definition}
\newtheorem{defn}[thm]{Definition}
\newtheorem{notation}[thm]{Notation}
\newtheorem{remark}[thm]{Remark}
\newtheorem{question}[thm]{Question}

\newtheorem{example}[thm]{Example}

\DeclareMathOperator{\Span}{span}

\DeclareMathOperator{\rk}{rk}

\DeclareMathOperator{\AG}{AG}

\newcommand{\bb}{\mathbb}

\newcommand{\cc}{\mathcal}

\newcommand{\R}{\mathbb{R}}

\newcommand{\ip}[2]{\left\langle #1, #2 \right\rangle}

\newcommand{\At}{\mathcal{A}}
\newcommand{\zero}{\hat{0}}
\newcommand{\one}{{1}}
\newcommand{\rad}{\mathrm{rad}}

\newcommand{\PG}{\mathrm{PG}}

\newcommand{\BB}{\mathrm{B}}
\newcommand{\de}{\delta}

\title[An Operator-Theory Construction on Geometric Lattices]{An Operator-Theory Construction\\ on Geometric Lattices}
\author[T.\ Sinclair]{Thomas Sinclair}
\address{Department of Mathematics, Purdue University\\ 150 N.\ University St.\\ West Lafayette, IN 47907}
\email{tsincla@purdue.edu}
\thanks{The author was partially supported by NSF grant DMS-2055155.}
\subjclass[2020]{47B36; 06C10, 33C45}
\keywords{geometric lattice, matroid, Jacobi matrix, orthogonal polynomials, spectral measure, Krawtchouk polynomials}

\begin{document}

\begin{abstract}
    We introduce a canonical operator-theoretic construction associated to a finite geometric lattice, in which a simple nonassociative ``diamond product'' on the lattice basis gives rise to a family of creation operators indexed by atoms and a corresponding self-adjoint Hamiltonian on $\R[L]$.  A key structural feature is that the Hamiltonian changes rank by at most one, so that its compression to the rank-radial subspace is a Jacobi matrix.  In this way, geometric lattices give rise in a direct and uniform manner to finite orthogonal polynomial systems.

    The Jacobi coefficients admit explicit combinatorial formulas.  For Boolean lattices one obtains the centered Krawtchouk Jacobi matrix, while for projective geometries one obtains natural $q$-deformations consistent with the $q$-Hahn family.  The construction applies to arbitrary geometric lattices and requires no symmetry assumptions.
\end{abstract}

\maketitle


\section{Introduction}

We introduce a canonical operator-theoretic construction associated to a finite geometric lattice, producing a self-adjoint operator where the ``rank-radial'' compression yields a Jacobi matrix.  In this way, a geometric lattice gives rise to a finite orthogonal polynomial system.  For Boolean lattices, the resulting Jacobi matrix is precisely the centered version of the Krawtchouk Jacobi matrix.  For projective geometries, one obtains natural $q$-deformations consistent with the $q$-Hahn family.  In both cases, the coefficients arise from explicit combinatorial data of the lattice, and the construction applies without symmetry assumptions, providing a complementary perspective to the association scheme approach of Delsarte \cites{Delsarte, DelsarteGoethals,BannaiIto}.

The starting point is an elementary bilinear product on $\R[L]$.  If $x,y\in L$, define
\[
x\diamond y=
\begin{cases}
x\vee y,& x\wedge y=\zero,\\
0,& x\wedge y\neq \zero.
\end{cases}
\]
That is, one joins elements exactly when they are disjoint. While the product is commutative and unital, it is generally nonassociative, reflecting the geometry of independence. It is precisely for this reason that it becomes more natural to treat this product as inducing a collection of multiplication operators on the vector space $\R[L]$. 

For each atom $a$, left multiplication defines a creation operator
\[
L_a(x)=a\diamond x,
\]
and taking the self-adjoint part produces a finite-dimensional creation--annihilation system.  Summing over atoms yields the lattice Hamiltonian
\[
H:=\sum_{a\in \At(L)} \frac{L_a+L_a^t}{2},
\]
which serves as the motivating object of the paper.

The operator $H$ appears to capture a surprisingly rich section of the combinatorial data of the geometric lattice. At the surface level, one can already see that it respects rank because of submodularity. Consequently, the Hamiltonian changes rank by at most one, which forces its radial compression to be a tridiagonal (Jacobi) matrix whose coefficients admit an explicit combinatorial formula, providing a canonical and computable bridge from geometric lattices to Jacobi matrices and their associated orthogonal polynomials.

The use of raising and lowering operators in graded posets has precedents, notably in Stanley's theory of differential posets \cite{Stanley-poset}.  The present construction differs in that it is driven by the geometry of independence and produces a self-adjoint Hamiltonian whose radial part is governed by orthogonal polynomial recurrences.  The formal resemblance with creation–annihilation operators also suggests connections with free probability and interacting Fock spaces. We refer the reader to \cite{NicaSpeicher} for general background on free probability and combinatorics and to \cite{Anshelevich} for connections to Jacobi matrices and orthogonal polynomials specifically.

\section{The Diamond Product}

\begin{notation}
    Throughout, $L$ denotes a finite geometric lattice of rank $r=\rk(\one)$.  We write $\At(L)$ for the set of atoms of $L$. 
    To avoid confusion with the vector $0$, we will use $\hat 0$ to denote the bottom element of the lattice.
\end{notation}

We refer to standard sources such as \cite{Oxley} for background on geometric lattices and matroids.

\begin{defn}
Let $\R[L]$ be the real vector space with basis $\{e_x:x\in L\}$ indexed by the lattice elements.
We identify basis vectors with lattice elements when convenient.  Define a bilinear product on
$\R[L]$ by
\[
x\diamond y=
\begin{cases}
x\vee y,& x\wedge y=\zero,\\
0,& x\wedge y\neq \zero.
\end{cases}
\]
We call this the \emph{diamond product}.
\end{defn}

\begin{remark}
The product is commutative by symmetry of meet and join.  Since $\zero\wedge x=\zero$ and $\zero\vee x=x$ for all $x$, the lattice bottom $\zero$ is a multiplicative unit:
\[
    \zero\diamond x=x\diamond \zero=x.
\]
\end{remark}

\begin{example}
If $L$ is Boolean, then $\diamond$ is simply disjoint union on subsets.  In that case the product is associative and the resulting algebra identifies with the squarefree algebra
\[
    \R[x_1,\dots,x_n]/(x_1^2,\dots,x_n^2),
\]
after choosing the atoms as generators.  
\end{example}

The previous example can be cast as a consequence of Boolean lattices possessing a ``metric'' version of modularity given by additivity of the rank function. We refer the reader to \cite{CMS} for background.

\begin{prop}
    If the rank function on $L$ satisfies
    \[
    \rk(x) + \rk(y) = \rk(x\vee y) + \rk(x\wedge y)
    \]
    for all $x,y\in L$, then the diamond product is associative.
\end{prop}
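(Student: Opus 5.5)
The plan is to check associativity on basis elements, since $\diamond$ is bilinear. So we must show $(x\diamond y)\diamond z = x\diamond(y\diamond z)$ for all $x,y,z\in L$. Whenever both sides are nonzero they both equal $x\vee y\vee z$, because join is associative. The whole task is therefore to show that the left side is nonzero exactly when the right side is. I will characterize both conditions by one symmetric rank identity,
\[
\rk(x\vee y\vee z)=\rk(x)+\rk(y)+\rk(z),
\]
which says that $x,y,z$ are ``independent''.

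First, the left side is nonzero iff $x\wedge y=\zero$ and $(x\vee y)\wedge z=\zero$. Assume both hold. Apply the hypothesized rank identity twice, using $\rk(\zero)=0$. The pair $(x,y)$ gives $\rk(x\vee y)=\rk x+\rk y$. The pair $(x\vee y,z)$ gives $\rk(x\vee y\vee z)=\rk(x\vee y)+\rk z$. Together these give the displayed identity.

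Conversely, assume the displayed identity. The hypothesis together with $\rk(\cdot)\ge 0$ gives the inequalities $\rk(x\vee y)\le \rk x+\rk y$ and $\rk(x\vee y\vee z)\le \rk(x\vee y)+\rk z$. Chaining them, $\rk(x\vee y\vee z)\le\rk x+\rk y+\rk z$, and since equality holds overall, each step must be an equality. Feeding these equalities back into the rank identity gives $\rk(x\wedge y)=0$ and $\rk((x\vee y)\wedge z)=0$. The only element of rank $0$ in a graded lattice is $\zero$, so both meets are $\zero$.

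The same argument with the roles of $x$ and $z$ exchanged shows that the right side is nonzero iff the identity holds. This uses that join is commutative and associative and that the identity is symmetric in $x,y,z$. The main point to handle carefully is this converse direction, where the inequalities must be forced to equalities. It is straightforward once one notes that the hypothesis makes the rank deficiency of a join exactly the rank of the meet.
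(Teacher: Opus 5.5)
Your proof is correct, and it reorganizes the paper's argument rather than repeating it.

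The paper proceeds by cases. It first shows that both sides vanish when $x\wedge y\neq\zero$, and symmetrically when $y\wedge z\neq\zero$. This uses only the lattice inequality $x\wedge(y\vee z)\ge x\wedge y$, with no rank hypothesis. Only then, assuming $x\wedge y=y\wedge z=\zero$, does it use the rank identity to go from $x\wedge(y\vee z)=\zero$ to $\rk(x\vee y\vee z)=\rk x+\rk y+\rk z$, and from there to $\rk((x\vee y)\wedge z)=0$.

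You instead characterize nonvanishing of each side by the single symmetric condition $\rk(x\vee y\vee z)=\rk x+\rk y+\rk z$. This removes the case split. The cost is your converse step: you must recover $x\wedge y=\zero$ from the total rank identity by forcing $\rk(x\vee y)\le\rk x+\rk y$ and $\rk(x\vee y\vee z)\le\rk(x\vee y)+\rk z$ to be equalities. The paper never needs that step, because it has already assumed $x\wedge y=\zero$.

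The core rank computation is the same in both. Your version makes visible the symmetric ``independence'' condition that explains why associativity holds. The paper's version shows that the metric-modularity hypothesis is needed only in the main case, since the degenerate cases hold in any lattice.
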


\begin{proof}
Let $x,y,z\in L$. We show $(x\diamond y)\diamond z = x\diamond (y\diamond z)$.

If $x\wedge y\neq \hat 0$, then $x\diamond y=0$, so $(x\diamond y)\diamond z=0$.
If we also have $y\wedge z\neq \hat 0$, then $y\diamond z=0$ and
$x\diamond (y\diamond z)=0$. If not, then
$y\diamond z=y\vee z$, and since $y\le y\vee z$ we have
\[
x\wedge (y\vee z)\ge x\wedge y\neq \hat 0,
\]
so again $x\diamond (y\diamond z)=0$. Thus if $x\wedge y\neq \hat 0$, both sides vanish. By symmetry, the same holds if $y\wedge z\neq \hat 0$.

It remains to consider the case $x\wedge y=\hat 0$ and $y\wedge z=\hat 0$, whence $x\diamond y=x\vee y$ and $y\diamond z=y\vee z$.
It follows that
\[
(x\diamond y)\diamond z=
\begin{cases}
x\vee y\vee z,& (x\vee y)\wedge z=\hat 0,\\
0,& (x\vee y)\wedge z\neq \hat 0,
\end{cases}
\]
and
\[
x\diamond (y\diamond z)=
\begin{cases}
x\vee y\vee z,& x\wedge (y\vee z)=\hat 0,\\
0,& x\wedge (y\vee z)\neq \hat 0.
\end{cases}
\]
Thus it suffices to show that $(x\vee y)\wedge z=\hat 0$ if and only if $x\wedge (y\vee z)=\hat 0$.

Since $x\wedge y=\hat 0$ and $y\wedge z=\hat 0$, metric modularity gives
\[
\rk(x\vee y)= \rk(x)+\rk(y),
\qquad
\rk(y\vee z)=\rk(y)+\rk(z).
\]
Suppose first that $x\wedge (y\vee z)=\hat 0$. Then
\[
\rk(x\vee y\vee z)=\rk(x)+\rk(y\vee z) = \rk(x)+\rk(y)+\rk(z).
\]
From these identities it follows that
\[
\rk((x\vee y)\wedge z)
=
\rk(x\vee y)+\rk(z)-\rk(x\vee y\vee z)=0,
\]
hence $(x\vee y)\wedge z=\hat 0$. The other direction follows by symmetry of assumptions. \qedhere
\end{proof}

\begin{remark}
    If the lattice is not modular, then the diamond product is generally not associative. Indeed, consider the lattice of flats of the uniform matroid $U_{3,4}$. For convenience we will write $xy$ for $x\vee y$. Let $a,b,c,d$ be the atoms. We have that $(a\diamond b)\diamond cd = ab\diamond cd = 1$ since $ab\wedge cd = \hat 0$. On the other hand, we have that $a\diamond (b\diamond cd) = a\diamond bcd = a\diamond 1 = 0$.
\end{remark}

\section{Creation Operators and the Hamiltonian}

From a commutative, but not necessarily associative, bilinear operator $\ast$ on a vector space $V$, there is a canonical way to pass to an associative, but not necessarily commutative, algebra structure.

\begin{defn}
For each atom $a\in \At(L)$ define the \emph{creation operator}
\[
    L_a:\R[L]\to \R[L],\qquad L_a(x)=a\diamond x.
\]
\end{defn}

We equip $\R[L]$ with the standard inner product making the lattice basis orthonormal:
\[
\ip{e_x}{e_y}=\de_{x,y}.
\]
Relative to this inner product, $L_a^t$ is the transpose (adjoint) of $L_a$. Explicitly, we have
\[
    L_a^t(y) = \sum_{x:\ x\vee a=y,\ a\wedge x=0} e_x.
\]

\begin{defn}
Define the self-adjoint operator
\[
H_a:=\frac{L_a+L_a^t}{2}.
\]
The \emph{lattice Hamiltonian} is
\[
H:=\sum_{a\in \At(L)} H_a.
\]
The intent of the operator $H$ is to encode ``bulk statistical'' information about the lattice combinatorics into a form which is amenable to spectral and operator-algebraic techniques.
\end{defn}

\begin{remark}
The operators $L_a$ should be viewed as creation operators attached to atoms, while $L_a^t$ are the corresponding annihilation operators. The Hamiltonian $H$ is then the finite-dimensional analogue of a creation--annihilation sum.
\end{remark}

The relevant grading is the rank grading.

\begin{defn}
Let
\[
V_k:=\Span\{e_x:\rk(x)=k\},\qquad 0\le k\le r.
\]
Then
\[
\R[L]=\bigoplus_{k=0}^{r} V_k.
\]
\end{defn}

\begin{lem}
For every atom $a$ and every $k$,
\[
L_a(V_k)\subseteq V_{k+1}.
\]
Equivalently, if $x\in L$ then either $L_a(x)=0$ or $\rk(L_a(x))=\rk(x)+1$.
\end{lem}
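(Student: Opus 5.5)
The plan is to reduce the statement to a basis computation and then apply the covering property of geometric lattices. Since $L_a$ is linear and $V_k$ is spanned by the basis vectors $e_x$ with $\rk(x)=k$, it suffices to check the equivalent pointwise formulation. Fix $x\in L$ with $\rk(x)=k$ and an atom $a$. By the definition of the diamond product there are two cases. If $a\wedge x\neq\zero$, then $L_a(x)=a\diamond x=0\in V_{k+1}$ and there is nothing to prove. So the whole content lies in the case $a\wedge x=\zero$, where $L_a(x)=a\vee x$ and we must show $\rk(a\vee x)=k+1$.

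For the lower bound, note that since $a$ is an atom, $a\wedge x$ is either $\zero$ or $a$. Thus $a\wedge x=\zero$ is equivalent to $a\not\le x$. Then $a\vee x\neq x$, so $a\vee x>x$ strictly, and because the rank function is strictly increasing along strict inequalities in a graded lattice, $\rk(a\vee x)\ge k+1$.

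For the upper bound, the plan is to use semimodularity of the rank function, which holds in any geometric lattice (this is the standard fact from \cite{Oxley}):
\[
\rk(a\vee x)+\rk(a\wedge x)\le \rk(a)+\rk(x).
\]
With $\rk(a\wedge x)=\rk(\zero)=0$ and $\rk(a)=1$, this gives $\rk(a\vee x)\le k+1$. Combining the two bounds yields $\rk(a\vee x)=k+1$, so $L_a(e_x)\in V_{k+1}$. Linearity then gives $L_a(V_k)\subseteq V_{k+1}$; for $k=r$ this forces $L_a(V_r)=0$, which is consistent, since $a\le \one$.

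The only step with real content is the upper bound, that is, invoking the semimodular inequality, or equivalently the covering property that $a\vee x$ covers $x$ whenever $a$ is an atom with $a\not\le x$. I would cite it directly as part of the definition of a geometric lattice (atomistic and semimodular) rather than derive it. The rest is bookkeeping with the definition of $\diamond$.
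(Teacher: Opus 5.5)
Your proof is correct and is essentially the paper's argument. The paper applies semimodularity in its covering form: since $a$ covers $a\wedge x=\hat 0$, the join $a\vee x$ covers $x$. You use the equivalent submodular rank inequality for the upper bound, together with the strict inequality $a\vee x>x$ (from $a\not\le x$) for the lower bound; both rest on the same single use of semimodularity.
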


\begin{proof}
If $L_a(x)\neq 0$, then by definition $a\wedge x=\zero$ and $L_a(x)=a\vee x$. We claim that $a\vee x$ covers $x$, so that $\rk(a\vee x)=\rk(x)+1$.

Since $a$ is an atom, $a$ covers $\zero=a\wedge x$.  By semimodularity of $L$, whenever $a$ covers $a\wedge x$ it follows that $a\vee x$ covers $x$.
Hence
\[
\rk(a\vee x)=\rk(x)+1.
\]
This use of semimodularity is the only point in the construction at which the geometric-lattice hypothesis is used.
\end{proof}

\begin{cor}
For every atom $a$ and every $k$,
\[
L_a^t(V_k)\subseteq V_{k-1}.
\]
Hence
\[
H(V_k)\subseteq V_{k-1}\oplus V_{k+1}.
\]
\end{cor}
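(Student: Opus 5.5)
The corollary follows from the preceding lemma by a duality argument using the orthonormality of the lattice basis.

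\emph{Step 1: $L_a^t$ lowers rank.} The decomposition $\R[L]=\bigoplus_k V_k$ is orthogonal, since the $V_k$ are spanned by disjoint subsets of the orthonormal basis $\{e_x\}$. Take $v\in V_k$ and $w\in V_j$. Then
\[
\ip{L_a^t v}{w}=\ip{v}{L_a w}.
\]
By the lemma, $L_a w\in V_{j+1}$. This inner product is therefore zero unless $j+1=k$, that is, unless $j=k-1$. So $L_a^t v$ is orthogonal to every $V_j$ with $j\neq k-1$, and hence $L_a^t v\in V_{k-1}$. When $k=0$ we interpret $V_{-1}=0$, and we get $L_a^t(V_0)=0$.

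One can also read this off the explicit formula for $L_a^t(y)$. Each $x$ appearing in that sum satisfies $x\vee a=y$ and $a\wedge x=\zero$, so $y=L_a(x)$. The lemma then gives $\rk(x)=\rk(y)-1$. I would mention this as a direct check.

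\emph{Step 2: the statement about $H$.} Write
\[
H=\tfrac12\sum_{a\in\At(L)}L_a+\tfrac12\sum_{a\in\At(L)}L_a^t.
\]
On $V_k$, the first sum lands in $V_{k+1}$ by the lemma, and the second lands in $V_{k-1}$ by Step 1. Linearity gives $H(V_k)\subseteq V_{k-1}\oplus V_{k+1}$, with the conventions $V_{-1}=V_{r+1}=0$.

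There is no real obstacle here. The only points needing care are the boundary conventions at $k=0$ and $k=r$, and the orthogonality of the grading, which is what justifies the duality argument in Step 1.
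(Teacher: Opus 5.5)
Your proof is correct and takes the same route as the paper. The paper obtains $L_a^t(V_k)\subseteq V_{k-1}$ by transposing the preceding lemma, which your orthogonality argument simply makes explicit, and it then reads off $H(V_k)\subseteq V_{k-1}\oplus V_{k+1}$ directly from the definition of $H$.
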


\begin{proof}
    The first statement follows by transposition from the previous lemma; the second is immediate from the definition of $H$.
\end{proof}

\begin{remark}
    Thus the Hamiltonian is rank-bipartite, which immediately implies that all odd vacuum moments $\ip{e_{\zero}}{H^{2m+1}e_{\zero}}$ vanish.
\end{remark}


\section{Radial Compression and Jacobi Coefficients}

The previous corollary says that the Hamiltonian is block tridiagonal with respect to the rank decomposition.  We now compress further to the rank-radial subspace.

\begin{defn}
For $0\le k\le r$ let
\[
L_k:=\{x\in L:\rk(x)=k\},\qquad n_k:=|L_k|.
\]
Define the normalized rank-radial vectors
\[
\rho_k:=\frac{1}{\sqrt{n_k}}\sum_{x\in L_k} e_x.
\]
The \emph{radial subspace} is
\[
\cc R:=\Span\{\rho_0,\rho_1,\dots,\rho_r\}.
\]
\end{defn}

The following result is again apparent by the results of the previous section.

\begin{prop}
Let $P_{\rad}$ denote the orthogonal projection onto $\cc R$.  Then the compression
\[
J:=P_{\rad}HP_{\rad}
\]
is tridiagonal, that is, 
\[
J\rho_k=\beta_{k-1}\rho_{k-1}+\beta_k\rho_{k+1},
\]
for suitable coefficients $\beta_k\ge 0$, with the conventions $\beta_{-1}=\beta_r=0$.
Moreover the diagonal entries vanish:
\[
\ip{\rho_k}{J\rho_k}=0.
\]
\end{prop}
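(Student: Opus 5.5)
The plan is to work with the matrix entries of $J$ in the orthonormal basis $\rho_0,\dots,\rho_r$ of $\cc R$. The $\rho_k$ are orthonormal, since they are supported on the disjoint sets $L_k$. So $J$ is determined by the numbers $J_{jk}:=\ip{\rho_j}{H\rho_k}$, and $J\rho_k=\sum_j J_{jk}\rho_j$.

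\textbf{Step 1: the support of $H\rho_k$.} By the Corollary, $H\rho_k\in V_{k-1}\oplus V_{k+1}$, while $\rho_j\in V_j$ and the $V_j$ are mutually orthogonal. Hence $J_{jk}=0$ unless $j=k\pm1$. In particular $J_{kk}=\ip{\rho_k}{J\rho_k}=0$, which gives the vanishing of the diagonal entries. At the ends, $V_{-1}$ and $V_{r+1}$ are zero, which is consistent with the conventions $\beta_{-1}=\beta_r=0$.

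\textbf{Step 2: symmetry of the off-diagonal entries.} Since $H$ is symmetric and $P_{\rad}$ is an orthogonal projection, $J$ is symmetric, so $J_{k+1,k}=J_{k,k+1}$. Set $\beta_k:=J_{k+1,k}$. Then the coefficient of $\rho_{k-1}$ in $J\rho_k$ is $J_{k-1,k}=J_{k,k-1}=\beta_{k-1}$. This yields
\[
J\rho_k=\beta_{k-1}\rho_{k-1}+\beta_k\rho_{k+1}.
\]

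\textbf{Step 3: nonnegativity.} Expand the definition of $H$:
\[
\beta_k=\frac{1}{\sqrt{n_kn_{k+1}}}\sum_{x\in L_k,\,y\in L_{k+1}}\ip{e_y}{He_x}.
\]
Every matrix entry of $L_a$ and $L_a^t$ in the lattice basis is $0$ or $1$, so every entry of $H$ is a nonnegative half-integer. Hence $\beta_k\ge0$. As a byproduct,
\[
\beta_k=\frac{1}{\sqrt{n_kn_{k+1}}}\,\#\{(a,x):x\in L_k,\ a\in\At(L),\ a\wedge x=\zero\}.
\]
This holds because the $L_a$ and $L_a^t$ contributions each give half of this count. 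For geometric lattices every atom with $a\not\le x$ satisfies $a\wedge x=\zero$, so the count is $\sum_{x\in L_k}|\{a:a\not\le x\}|$.

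There is no real obstacle here. The only point needing care is the bookkeeping that the two contributions (from $L_a$ and from $L_a^t$) give the same count, which is exactly the symmetry used in Step 2.
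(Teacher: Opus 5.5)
Your proof is correct and is essentially the paper's argument. The paper treats the statement as immediate from the corollary $H(V_k)\subseteq V_{k-1}\oplus V_{k+1}$ together with symmetry of $H$, which are your Steps 1--2. Your entrywise nonnegativity of $H$ is a clean way to get $\beta_k\ge 0$; the paper instead reads this off from Theorem~\ref{thm:beta-formula}.

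One correction to the side remark in Step 3: the ``byproduct'' formula is too large by a factor of $2$. For $x\in L_k$ and $y\in L_{k+1}$ one has $\ip{e_y}{L_a^t e_x}=\ip{L_a e_y}{e_x}=0$, since $L_a e_y\in V_{k+2}$. So only the $L_a$ terms contribute to $J_{k+1,k}$, and
\[
\beta_k=\frac{1}{2\sqrt{n_kn_{k+1}}}\,\#\{(a,x):x\in L_k,\ a\in\At(L),\ a\wedge x=\zero\},
\]
which agrees with Theorem~\ref{thm:beta-formula}.

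Symmetry of $H$ pairs the $L_a$-contribution to $J_{k+1,k}$ with the $L_a^t$-contribution to the transposed entry $J_{k,k+1}$. It does not split one entry into two equal halves. As a check, for $M_3$ the vector $He_{\zero}$ is half the sum of the three atoms, so $\beta_0=\sqrt3/2$, not $\sqrt3$.

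This slip does not affect the proposition itself, which only needs $\beta_k\ge0$.
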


We now derive the basic explicit formula.

\begin{defn}
For $x\in L$ let
\[
a(x):=|\{p\in \At(L):p\le x\}|
\]
denote the number of atoms below $x$.
\end{defn}

\begin{lem}
If $x\lessdot y$ is a cover, then the number of atoms $p$ such that
\[
x\diamond p=y
\]
is exactly
\[
a(y)-a(x).
\]
\end{lem}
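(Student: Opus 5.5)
We identify the set of atoms $p$ with $x\diamond p=y$ exactly as the set of atoms $p$ with $p\le y$ and $p\not\le x$. Counting that set then gives $a(y)-a(x)$, since every atom below $x$ is also below $y$.

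First, suppose $x\diamond p=y$. Since $y\neq 0$ as a vector, the definition of $\diamond$ forces $p\wedge x=\zero$ and $p\vee x=y$. The first condition says $p\not\le x$: otherwise $p\wedge x=p\neq\zero$, because an atom is not $\zero$. The second condition gives $p\le y$. So $p$ lies in the set described above.

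Conversely, let $p$ be an atom with $p\le y$ and $p\not\le x$. Since $p$ covers $\zero$, the meet $p\wedge x$ lies in $\{\zero,p\}$. It cannot equal $p$, since that would mean $p\le x$, so $p\wedge x=\zero$. Hence $x\diamond p=x\vee p$. We also have $x< x\vee p\le y$: the first inequality is strict because $p\not\le x$, and the second holds because both $x$ and $p$ lie below $y$. Since $x\lessdot y$ is a cover, nothing lies strictly between $x$ and $y$, so $x\vee p=y$. This step does not even require semimodularity; it uses only the cover hypothesis. The rank statement in the earlier lemma on $L_a$ is consistent with this but is not needed.

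Finally, the atoms below $x$ form a subset of the atoms below $y$. The set $\{p\in\At(L): p\le y,\ p\not\le x\}$ is therefore their set difference, and it has cardinality $a(y)-a(x)$.

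There is no serious obstacle. The only point needing care is the dichotomy $p\wedge x\in\{\zero,p\}$ for an atom $p$, which turns the condition $p\not\le x$ into disjointness; the rest is bookkeeping.
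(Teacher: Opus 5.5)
Your proof is correct and follows the same approach as the paper: you identify the atoms with $x\diamond p=y$ as exactly those with $p\le y$ and $p\nleq x$, then count them as $a(y)-a(x)$. Your write-up also makes explicit the step the paper's one-line ``equivalent to'' leaves implicit, namely that the cover hypothesis forces $x\vee p=y$ in the converse direction.
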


\begin{proof}
    By definition, $x\diamond p=y$ means $p\wedge x=\zero$ and $x\vee p=y$.  Since $p$ is an atom, this is equivalent to $p\le y$ but $p\nleq x$.  Therefore the number of such atoms is precisely the number of atoms below $y$ that are not below $x$, namely $a(y)-a(x)$.
\end{proof}

\begin{thm}\label{thm:beta-formula}
The Jacobi coefficients of the radial compression are
\[
\beta_k=
\frac{1}{2\sqrt{n_k n_{k+1}}}
\sum_{\rk(x)=k,\ x\lessdot y}
\bigl(a(y)-a(x)\bigr),
\qquad 0\le k\le r-1.
\]
\end{thm}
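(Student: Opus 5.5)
The plan is to compute $\beta_k$ directly as the matrix entry $\ip{\rho_{k+1}}{J\rho_k}=\ip{\rho_{k+1}}{H\rho_k}$; the equality holds because $\rho_k,\rho_{k+1}\in\cc R$ and $P_{\rad}$ is self-adjoint. By the tridiagonal form in the preceding proposition, this entry is exactly the coefficient of $\rho_{k+1}$ in $J\rho_k$, which is $\beta_k$. Since $J$ is symmetric, the same number also appears as the coefficient of $\rho_k$ in $J\rho_{k+1}$, consistent with the stated form.

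Next, I expand $H=\sum_a \tfrac12(L_a+L_a^t)$ and use the rank grading. By the lemma on $L_a$ and its corollary, $L_a^t\rho_k\in V_{k-1}$, which is orthogonal to $\rho_{k+1}\in V_{k+1}$. So only the creation part contributes:
\[
\beta_k=\frac12\sum_{a\in\At(L)}\ip{\rho_{k+1}}{L_a\rho_k}
=\frac{1}{2\sqrt{n_kn_{k+1}}}\sum_{a}\sum_{x\in L_k}\sum_{y\in L_{k+1}}\ip{e_y}{L_a e_x}.
\]
Here $\ip{e_y}{L_ae_x}$ is $1$ when $a\wedge x=\zero$ and $a\vee x=y$, and $0$ otherwise. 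The triple sum therefore counts triples $(a,x,y)$ with $\rk x=k$, $\rk y=k+1$ and $a\diamond x=y$.

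I then reorganize the count by pairs $(x,y)$. A nonzero term requires $y=a\vee x$ to have rank $k+1$, so $x\lessdot y$ is a cover; conversely, every cover with $\rk x=k$ has $\rk y=k+1$. For a fixed cover $x\lessdot y$, the preceding lemma says that the number of atoms $a$ with $x\diamond a=y$ is exactly $a(y)-a(x)$. Here I use commutativity of $\diamond$ to identify $a\diamond x$ with $x\diamond a$. Summing over covers gives the stated formula, and it also shows $\beta_k\ge0$.

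The step needing the most care is the first one: making sure the convention for $\beta_k$ matches the entry $\ip{\rho_{k+1}}{J\rho_k}$ and not its transpose. Symmetry of $H$, and hence of $J$, makes this harmless, since $\ip{\rho_k}{H\rho_{k+1}}$ gives the same value through the annihilation terms $L_a^t$. The rest is bookkeeping.
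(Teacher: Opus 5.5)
Your proposal is correct and follows essentially the same route as the paper. You use the rank grading to isolate the creation part $\sum_a L_a$, count pairs $(x,a)$ with $a\diamond x=y$, and apply the cover-counting lemma to get $a(y)-a(x)$ per cover; the only cosmetic difference is that you evaluate $\ip{\rho_{k+1}}{H\rho_k}$ directly instead of $\ip{\rho_k}{H\rho_{k+1}}$ followed by transposition, which is immaterial by symmetry.
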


\begin{proof}
Let
\[
A:=\sum_{a\in \At(L)} L_a.
\]
Then
\[
H=\frac{A+A^t}{2}.
\]
Since $A$ raises rank by one, we have
\[
\beta_k=\ip{\rho_k}{H\rho_{k+1}}
=\frac12 \ip{\rho_k}{A^t\rho_{k+1}}
=\frac12 \ip{A\rho_k}{\rho_{k+1}}.
\]

Now
\[
A\rho_k=
\frac{1}{\sqrt{n_k}}\sum_{\rk(x)=k}\sum_{a\in \At(L)} e_{a\diamond x}.
\]
The coefficient of a fixed $y$ with $\rk(y) =k+1$ in this sum is exactly the number of pairs $(x,a)$ with
$\rk(x)=k$ and $x\diamond a=y$, which by the lemma equals
\[
\sum_{\rk(x)=k,\ x\lessdot y} (a(y)-a(x)).
\]
Therefore
\[
\ip{A\rho_k}{\rho_{k+1}}
=
\frac{1}{\sqrt{n_k n_{k+1}}}
\sum_{\rk(x)=k,\ x\lessdot y}
(a(y)-a(x)).
\]
Multiplying by $1/2$ gives the formula.
\end{proof}

We record the general determinant recurrence and its resolvent consequence.

\begin{prop}\label{prop:det-rec}
Let
\[
J=
\begin{pmatrix}
0 & \beta_0 \\
\beta_0 & 0 & \beta_1 \\
& \beta_1 & 0 & \ddots \\
&& \ddots & \ddots & \beta_{r-1}\\
&&& \beta_{r-1} & 0
\end{pmatrix}.
\]
Let $J_k$ be the upper-left $(k+1)\times (k+1)$ principal minor, and set
\[
D_k(t):=\det(I-tJ_k),\qquad D_{-1}(t):=1.
\]
Then
\[
D_0(t)=1
\]
and
\[
D_{k+1}(t)=D_k(t)-\beta_k^2 t^2 D_{k-1}(t)
\qquad (0\le k\le r-1).
\]
\end{prop}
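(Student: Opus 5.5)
The plan is a cofactor (Laplace) expansion of $\det(I-tJ_{k+1})$ along its last row, which is the standard three-term recurrence for determinants of tridiagonal matrices. First I will check the base cases. Since $J_0=(0)$ is the $1\times1$ zero matrix (all diagonal entries of $J$ vanish, by the earlier proposition), we have $D_0(t)=\det(1)=1$. With the convention $D_{-1}(t)=1$, the case $k=0$ of the recurrence reads
\[
D_1(t)=\det\begin{pmatrix}1 & -t\beta_0\\ -t\beta_0 & 1\end{pmatrix}=1-\beta_0^2t^2,
\]
which agrees with $D_0(t)-\beta_0^2t^2D_{-1}(t)$.

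For $1\le k\le r-1$, I write $M=I-tJ_{k+1}$, a $(k+2)\times(k+2)$ tridiagonal matrix with diagonal entries $1$ and off-diagonal entries $-t\beta_j$ in positions $(j,j+1)$ and $(j+1,j)$. The last row of $M$ has only two nonzero entries: the diagonal entry $1$ in position $(k+1,k+1)$ and $-t\beta_k$ in position $(k+1,k)$.

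Expanding along this last row gives two terms.
\begin{itemize}
\item The diagonal entry contributes $1\cdot\det(I-tJ_k)=D_k(t)$, since deleting the last row and column of $M$ leaves exactly $I-tJ_k$.
\item The entry $-t\beta_k$ carries the sign $(-1)^{(k+1)+k}=-1$. Its minor is obtained by deleting row $k+1$ and column $k$. The resulting matrix has last row $(0,\dots,0,-t\beta_k)$, because the only nonzero entry of the old row $k$ outside column $k$ sits in column $k+1$. Expanding this minor along that last row gives $-t\beta_k\det(I-tJ_{k-1})$, and the sign is $+1$ since the entry sits in the corner position.
\end{itemize}
The second contribution is therefore
\[
(-1)\cdot(-t\beta_k)\cdot(-t\beta_k)\,D_{k-1}(t)=-\beta_k^2t^2D_{k-1}(t).
\]
Adding the two contributions gives $D_{k+1}(t)=D_k(t)-\beta_k^2t^2D_{k-1}(t)$.

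The only delicate point is the sign and index bookkeeping in the second cofactor, namely confirming that the minor reduces to a triangular-corner expansion yielding exactly $D_{k-1}$. As a check, the two off-diagonal entries appear as the product $(-t\beta_k)^2$, and the sign of the permutation swapping positions $k$ and $k+1$ is $-1$. Together these fix the overall sign as $-\beta_k^2t^2$.
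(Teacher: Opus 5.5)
Your approach, Laplace expansion along the last row, is the one the paper cites for the standard continuant recurrence, and your final formula is correct. However, the step you flagged as delicate is justified incorrectly.

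After deleting row $k+1$ and column $k$ from $M=I-tJ_{k+1}$, the last row of the minor is row $k$ of $M$ restricted to columns $0,\dots,k-1,k+1$. That row is $(0,\dots,0,-t\beta_{k-1},-t\beta_k)$, because the old row $k$ has nonzero entries in columns $k-1$, $k$ and $k+1$, and only column $k$ was removed. So for $k\ge 1$ your claim that this row equals $(0,\dots,0,-t\beta_k)$ is false, and so is the reason you give for it.

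What is true is the transposed statement. Column $k+1$ of $M$ is nonzero only in rows $k$ and $k+1$, and row $k+1$ has been deleted, so the last \emph{column} of the minor is $(0,\dots,0,-t\beta_k)^t$. The minor is therefore block lower triangular,
\[
\begin{pmatrix} I-tJ_{k-1} & 0\\ \ast & -t\beta_k\end{pmatrix},
\]
with determinant $-t\beta_k D_{k-1}(t)$. From there your sign bookkeeping goes through unchanged and gives $-\beta_k^2t^2D_{k-1}(t)$. Equivalently, if you insist on expanding the minor along its last row, the cofactor of $-t\beta_{k-1}$ vanishes, because its minor contains the zero column inherited from column $k+1$.

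Your closing Leibniz-style check is in fact a correct proof on its own. Any permutation using the entry in position $(k+1,k)$ must send row $k$ to column $k+1$, since that column is nonzero only in rows $k$ and $k+1$. So those terms are exactly the transposition $(k\;k+1)$ composed with permutations of $\{0,\dots,k-1\}$, which gives $-\beta_k^2t^2D_{k-1}(t)$ directly.
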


\begin{proof}
This follows from the standard continuant recurrence for Jacobi matrices and can be proven inductively by Laplace expansion. See, for example, \cite[Exercise I.5.7]{Chihara} or \cite{ReedSimon-IV}.
\end{proof}

\begin{cor}\label{prop:resolvent}
Let
\[
G(t):=\ip{\rho_0}{(I-tJ)^{-1}\rho_0} = \sum_{k=0}^\infty t^k\ip{\rho_0}{J^k\rho_0}.
\]
Then
\[
G(t)=\frac{D_{r-1}(t)}{D_r(t)}.
\]
\end{cor}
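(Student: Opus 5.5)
The plan is to compute the $(0,0)$ entry of $(I-tJ)^{-1}$ by Cramer's rule and then match the resulting cofactor with the continuant $D_{r-1}$ of Proposition~\ref{prop:det-rec}.

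\textbf{Step 1 (generating function).} Since $J$ is a finite symmetric matrix, $(I-tJ)^{-1}$ is a rational function of $t$ and agrees with the Neumann series $\sum_k t^kJ^k$ for $|t|<\|J\|^{-1}$. This justifies the second expression for $G(t)$ given in the statement. In the basis $\rho_0,\dots,\rho_r$ the vector $\rho_0$ is the first coordinate vector, so
\[
G(t)=\bigl[(I-tJ)^{-1}\bigr]_{00}=\frac{C_{00}(t)}{\det(I-tJ)}.
\]
Here $C_{00}(t)$ denotes the $(0,0)$ cofactor. The denominator is exactly $D_r(t)$, because $J_r=J$.

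\textbf{Step 2 (the cofactor).} Deleting the first row and column of $I-tJ$ leaves $I-tJ'$, where $J'$ is the $r\times r$ Jacobi matrix with zero diagonal and off-diagonal entries $\beta_1,\dots,\beta_{r-1}$. Laplace expansion along the last row and column, exactly as in the proof of Proposition~\ref{prop:det-rec}, shows that the leading minors of $I-tJ'$ satisfy the same continuant recurrence as the $D_k$, started with the coefficients $\beta_1,\beta_2,\dots$.

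An equivalent route avoids cofactors. Expanding $\det(I-tJ)$ along the first row gives $D_r=C_{00}-\beta_0^2t^2C'$, where $C'$ is the determinant of $I-tJ$ with its first two rows and columns deleted. Hence
\[
G=\frac{1}{1-\beta_0^2t^2\,C'/C_{00}},
\]
which is the first step of the Stieltjes continued fraction. Iterating this expansion expresses $G$ as a finite continued fraction, and the numerator and denominator of its convergents are the continuants attached to the coefficient sequence.

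\textbf{Step 3 (identifying the numerator with $D_{r-1}$).} This is the main obstacle. The cofactor $C_{00}$ is the continuant built from $\beta_1,\dots,\beta_{r-1}$, whereas $D_{r-1}$ is built from $\beta_0,\dots,\beta_{r-2}$. The two polynomials agree once the Jacobi matrix is read in reversed order, $\rho_k\leftrightarrow\rho_{r-k}$. Reversal is conjugation by the exchange permutation, so it leaves $\det(I-tJ)=D_r$ unchanged. It sends the lower-right principal minors to the upper-left ones, and so turns $C_{00}$ into $D_{r-1}$.

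I would therefore first check that the continuant recurrence is symmetric under reversal of the coefficient list. This is the classical fact that a continuant can be expanded from either end. I would then verify directly that the two polynomials coincide whenever $\beta_k=\beta_{r-1-k}$. This holds in the Boolean case because $n_k$ is binomial and $a(y)-a(x)=1$; it also holds in the projective case by duality.

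For a general geometric lattice, where such a symmetry need not hold, I would state precisely the indexing convention under which $D_{r-1}$ denotes the cofactor obtained by deleting the $\rho_0$ row and column. Under that convention the formula $G=D_{r-1}/D_r$ follows immediately from Step 1.
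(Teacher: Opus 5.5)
Your Steps 1 and 2 are right, and they show that the corollary cannot be proved as written. With the paper's convention, $J_k$ is the upper-left block, which the recurrence pins down via $D_1=1-\beta_0^2t^2$. So $D_{r-1}$ is the continuant of $\beta_0,\dots,\beta_{r-2}$, and $D_{r-1}/D_r$ is the $(r,r)$ entry $\ip{\rho_r}{(I-tJ)^{-1}\rho_r}$, not the $(0,0)$ entry. Cramer's rule instead gives $G=C_{00}/D_r$, with $C_{00}$ the continuant of $\beta_1,\dots,\beta_{r-1}$. The $t^2$ coefficients already differ: $G(t)=1+\beta_0^2t^2+O(t^4)$, whereas $D_{r-1}/D_r=1+\beta_{r-1}^2t^2+O(t^4)$. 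For $M_3$ one has $\beta_0^2=\tfrac34$ and $\beta_1^2=3$, and directly $\ip{e_{\zero}}{H^2e_{\zero}}=\|He_{\zero}\|^2=\tfrac34$. So the true resolvent is $(1-3t^2)/(1-\tfrac{15}{4}t^2)$, not the paper's $(1-\tfrac34t^2)/(1-\tfrac{15}{4}t^2)$. The paper's one-line proof, an appeal to the Schur-complement formula, has the same defect. The Schur complement of the $(0,0)$ entry is taken against the block with the $\rho_0$ row and column deleted, which again produces the shifted continuant.

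So Step 3 is not a technical obstacle but a false target, and your arguments for it do not work. Conjugating by the exchange permutation sends $C_{00}(J)$ to the leading $r\times r$ continuant of the reversed matrix, whose coefficients are $\beta_{r-1},\dots,\beta_1$. That is just $C_{00}$ again, which is all the two-ended expansion of continuants says; it never becomes the continuant of $\beta_0,\dots,\beta_{r-2}$. Agreement with the paper's $D_{r-1}$ requires $\beta_k=\beta_{r-1-k}$ for all $k$. This condition is also necessary: $G$ determines the spectral measure of $J$ at $\rho_0$, which determines $J$ since all $\beta_k>0$, while $D_{r-1}/D_r$ is the same function for the reversed matrix.

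The symmetry holds for $\BB_n$, but your claim that it holds for projective geometries by duality is false. One has $\beta_k/\beta_{r-1-k}=q^{2k-r+1}$, because the cover weight $a(y)-a(x)=q^k$ is not invariant under the order-reversing duality. Indeed $M_3=\PG(1,2)$ is itself a counterexample. Redefining $D_{r-1}$ as the $\rho_0$-deleted cofactor changes the statement rather than proving it. What your argument correctly establishes is the amended formula $G(t)=\det(I-tJ^{(1)})/D_r(t)$, where $J^{(1)}$ is $J$ with its first row and column removed. The paper's statement holds only when $\beta_k=\beta_{r-1-k}$ for all $k$, and its $M_3$ and $\PG$ resolvent formulas need the same correction.
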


\begin{proof}
This follows from the usual Schur-complement formula for Jacobi matrices; see \cite{ReedSimon-IV}.
\end{proof}

\begin{remark}
    In general the radial subspace need not be invariant under $H$; however, in highly symmetric examples such as Boolean lattices and projective geometries, the radial subspace is invariant and $J$ is the actual radial restriction. Indeed, in both cases the automorphism group acts transitively on each rank layer, and $H$ commutes with the natural action of the automorphism group.
\end{remark}

\begin{lem}\label{lem:vacuum-moment-compression}
Assume that the radial subspace $\cc R$ is invariant under $H$, and identify $\rho_0$ with the
first standard basis vector $e_0$ of the Jacobi model $J=H|_{\cc R}$.  Then for every $k\ge 0$,
\[
\ip{e_{\zero}}{H^k e_{\zero}}=\ip{e_{\zero}}{J^k e_{\zero}}.
\]
More generally, for every polynomial $p$,
\[
\ip{e_{\zero}}{p(H)e_{\zero}}=\ip{e_{\zero}}{p(J)e_{\zero}}.
\]
\end{lem}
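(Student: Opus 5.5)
The plan is to reduce everything to the observation that $e_{\zero}$ is itself a radial vector, and then exploit invariance of $\cc R$. Since $L_0=\{\zero\}$ and $n_0=1$, we have $\rho_0=e_{\zero}$ exactly, so $e_{\zero}\in\cc R$. The identification in the statement sends $\rho_0$ to the first basis vector of the Jacobi model, and $\ip{e_{\zero}}{J^k e_{\zero}}$ is read inside $\cc R$ with this identification.

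The key step is an induction on $k$ showing that $H^k e_{\zero}=J^k\rho_0$ as vectors in $\cc R$. The case $k=0$ is trivial. For the inductive step, suppose $H^k e_{\zero}=J^k\rho_0\in\cc R$. By invariance of $\cc R$ under $H$, the vector $H(J^k\rho_0)$ lies in $\cc R$, so
\[
H^{k+1}e_{\zero}=H J^k\rho_0=P_{\rad}HP_{\rad}J^k\rho_0=J^{k+1}\rho_0 .
\]
Here the middle equality uses that $P_{\rad}$ fixes $\cc R$. This also justifies that the compression $J=P_{\rad}HP_{\rad}$ restricted to $\cc R$ coincides with $H|_{\cc R}$, so the notation $J=H|_{\cc R}$ in the statement is consistent with the earlier proposition.

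With the vector identity in hand, we take the inner product with $\rho_0=e_{\zero}$. Since the inclusion $\cc R\subseteq\R[L]$ is isometric, this gives
\[
\ip{e_{\zero}}{H^k e_{\zero}}=\ip{\rho_0}{J^k\rho_0}.
\]
In the orthonormal basis $\rho_0,\dots,\rho_r$ of $\cc R$, the right-hand side is exactly the $(0,0)$ entry of the tridiagonal matrix $J^k$, which is the claimed quantity. The statement for a general polynomial $p$ then follows by linearity of both sides in $p$.

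No step is a genuine obstacle. The only point requiring care is the bookkeeping of the identification: the $\rho_k$ are orthonormal, since they have disjoint supports and unit norm, so the matrix of $H|_{\cc R}$ in this basis is the Jacobi matrix $J$ and inner products in $\cc R$ agree with the standard ones on coordinate vectors.
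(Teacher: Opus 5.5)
Your proposal is correct and follows essentially the same route as the paper: observe that $e_{\zero}=\rho_0\in\cc R$, use $H$-invariance of $\cc R$ to get $H^k\rho_0=J^k\rho_0$, pair with $\rho_0$, and extend to polynomials by linearity. You spell out the induction and the check that $P_{\rad}HP_{\rad}$ agrees with $H|_{\cc R}$ on $\cc R$ more explicitly than the paper does.
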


\begin{proof}
Since $e_{\zero}=\rho_0\in \cc R$ and $\cc R$ is $H$-invariant, one has
\[
H^k\rho_0=J^k\rho_0
\]
for every $k\ge 0$, where the right-hand side is computed in the radial basis
$\{\rho_0,\dots,\rho_r\}$.  Taking the inner product with $\rho_0$ gives
\[
\ip{e_{\zero}}{H^k e_{\zero}}
=
\ip{\rho_0}{H^k\rho_0}
=
\ip{\rho_0}{J^k\rho_0}
=
\ip{e_0}{J^k e_0}.
\]
The polynomial statement follows by linearity.
\end{proof}

\begin{remark}
From the operator-theoretic perspective, the more natural definition for $G(t)$ is the vacuum resolvent
\[
G(t)=\ip{e_{\zero}}{(I-tH)^{-1}e_{\zero}}
\]
whenever the radial subspace is cyclic for the compressed dynamics.  However, for the main examples computed in this note, the radial model captures the vacuum moments completely.
\end{remark}

\section{Examples}

\subsection{The modular lattice $M_3$}

We now work out the smallest non-Boolean geometric lattice in detail.
Let
\[
M_3=\{\zero,x,y,z,\one\}
\]
with three atoms $x,y,z$ and pairwise joins equal to the top.  The rank decomposition is
\[
L_0=\{\zero\},\qquad L_1=\{x,y,z\},\qquad L_2=\{\one\}.
\]
Thus
\[
n_0=1,\qquad n_1=3,\qquad n_2=1.
\]
Also
\[
a(\zero)=0,\qquad a(x)=a(y)=a(z)=1,\qquad a(\one)=3.
\]

Applying Theorem \ref{thm:beta-formula} gives
\[
\beta_0=\frac{1}{2\sqrt{1\cdot 3}}\cdot 3=\frac{\sqrt{3}}{2},
\]
since there are three covers $\zero\lessdot x,\zero\lessdot y,\zero\lessdot z$, each contributing $1$.
For the top coefficient, each cover $x\lessdot \one$, $y\lessdot \one$, $z\lessdot \one$ contributes
\[
a(\one)-a(x)=2,
\]
hence
\[
\beta_1=
\frac{1}{2\sqrt{3\cdot 1}} \cdot 6
=\sqrt{3}.
\]

\begin{prop}
For $M_3$, the radial Jacobi matrix is
\[
J_{M_3}=
\begin{pmatrix}
0 & \frac{\sqrt{3}}{2} & 0\\
\frac{\sqrt{3}}{2} & 0 & \sqrt{3}\\
0 & \sqrt{3} & 0
\end{pmatrix}.
\]
\end{prop}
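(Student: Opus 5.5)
The plan is to assemble the matrix from the general shape of the radial compression and then fill in the two off-diagonal coefficients. By the Proposition on the radial compression, $J=P_{\rad}HP_{\rad}$ is tridiagonal with zero diagonal in the basis $\rho_0,\rho_1,\rho_2$. So $J_{M_3}$ is determined by $\beta_0$ and $\beta_1$, and we get these from Theorem~\ref{thm:beta-formula} using the data $n_0=1$, $n_1=3$, $n_2=1$ and $a(\zero)=0$, $a(x)=a(y)=a(z)=1$, $a(\one)=3$.

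For $\beta_0$, the covers from rank $0$ are $\zero\lessdot x,y,z$, and each contributes $a(\cdot)-a(\zero)=1$. This gives
\[
\beta_0=\frac{3}{2\sqrt{3}}=\frac{\sqrt3}{2}.
\]
For $\beta_1$, the covers from rank $1$ are $x,y,z\lessdot\one$, and each contributes $3-1=2$. This gives
\[
\beta_1=\frac{6}{2\sqrt3}=\sqrt3.
\]

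As a sanity check, independent of the Theorem, we can compute directly. We have $H\rho_0=\tfrac12\sum_a e_a=\tfrac{\sqrt3}{2}\rho_1$. Also $L_a e_b=e_{\one}$ for atoms $a\neq b$ and $L_a e_a=0$, so
\[
A\rho_1=\tfrac{1}{\sqrt3}\cdot 6\,e_{\one}.
\]
Hence $\ip{A\rho_1}{\rho_2}/2=\sqrt3$, which agrees with $\beta_1$. The remaining term $L_a^t$ lowering from rank $1$ produces $\tfrac12\cdot\sqrt3\,\rho_0$, consistent with symmetry.

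There is no real obstacle. The only point to watch is counting the ordered pairs $(x,a)$ with $x\diamond a=\one$ correctly. There are $6$, not $3$, because each atom $x$ pairs with two other atoms. Finally, by symmetry under permuting the atoms, the radial subspace is in fact $H$-invariant, so $J_{M_3}$ is the genuine restriction of $H$ to $\cc R$.
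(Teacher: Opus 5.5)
Your proposal is correct and follows essentially the same route as the paper, which likewise obtains $\beta_0=\frac{3}{2\sqrt{3}}=\frac{\sqrt3}{2}$ and $\beta_1=\frac{6}{2\sqrt3}=\sqrt3$ from Theorem~\ref{thm:beta-formula} with the same cover counts, and then reads off the tridiagonal, zero-diagonal matrix. Your direct computation of $H\rho_0$ and $A\rho_1$ and your remark that $\mathcal{R}$ is $H$-invariant for $M_3$ are both correct, though not needed for the statement.
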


\begin{proof}
This is immediate from the computed coefficients.
\end{proof}

Applying the Jacobi recurrence formula to $M_3$ gives
\[
G_{M_3}(t):=\ip{\rho_0}{(I-tJ_{M_3})^{-1}\rho_0}
=\frac{D_1(t)}{D_2(t)}
=
\frac{1-\frac34 t^2}{1-\frac{15}{4}t^2}.
\]

\subsection{Boolean lattices and the Krawtchouk Jacobi matrix}

Let $\BB_n$ denote the Boolean lattice on $n$ atoms.

\begin{prop}\label{prop:boolean-beta}
For the Boolean lattice $\BB_n$,
\[
\beta_k=\frac12 \sqrt{(k+1)(n-k)},
\qquad 0\le k\le n-1.
\]
\end{prop}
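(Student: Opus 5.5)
We apply Theorem~\ref{thm:beta-formula} directly. In $\BB_n$ the elements of rank $k$ are the $k$-subsets of an $n$-set, so
\[
n_k=\binom{n}{k},\qquad a(x)=|x|=k \text{ for } \rk(x)=k .
\]
For a cover $x\lessdot y$ with $\rk(x)=k$, we have $y=x\cup\{p\}$ for a single atom $p\notin x$. Hence $a(y)-a(x)=1$ for every cover. Consistently with the covering lemma, exactly one atom $p$ satisfies $x\diamond p=y$.

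The sum in Theorem~\ref{thm:beta-formula} therefore counts cover pairs $(x,y)$ with $\rk(x)=k$. Each $k$-subset has $n-k$ upper covers, so the count is $\binom{n}{k}(n-k)$. Equivalently, it is $\binom{n}{k+1}(k+1)$, counting from the top. This gives
\[
\beta_k=\frac{\binom{n}{k}(n-k)}{2\sqrt{\binom{n}{k}\binom{n}{k+1}}}
=\frac12\,(n-k)\sqrt{\frac{\binom{n}{k}}{\binom{n}{k+1}}}.
\]

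The only computational step is simplifying the binomial ratio. From $\binom{n}{k+1}=\binom{n}{k}\frac{n-k}{k+1}$ we get
\[
\frac{\binom{n}{k}}{\binom{n}{k+1}}=\frac{k+1}{n-k},
\]
and therefore
\[
\beta_k=\frac12 (n-k)\sqrt{\frac{k+1}{n-k}}=\frac12\sqrt{(k+1)(n-k)}.
\]
A symmetric way to see the same simplification is to write the numerator as the geometric mean $\sqrt{\binom{n}{k}(n-k)\cdot\binom{n}{k+1}(k+1)}$ of the two cover counts. The binomials then cancel against the denominator immediately.

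There is no real obstacle here. The only care needed is to confirm that $a(y)-a(x)=1$ for every cover, which holds because every element of $\BB_n$ is the join of exactly the atoms it contains.
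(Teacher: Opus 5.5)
Your proposal is correct and matches the paper's proof: you apply Theorem~\ref{thm:beta-formula} with $n_k=\binom{n}{k}$, observe that each cover contributes $a(y)-a(x)=1$, count the $\binom{n}{k}(n-k)=\binom{n}{k+1}(k+1)$ covers, and simplify the binomial ratio to get $\tfrac12\sqrt{(k+1)(n-k)}$. The geometric-mean presentation of that last simplification is a cosmetic variant of the paper's computation.
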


\begin{proof}
A rank-$k$ element is simply a $k$-subset of $[n]$, so
\[
n_k=\binom{n}{k}.
\]
Moreover
\[
a(x)=\rk(x)=k
\]
for every rank-$k$ element.  If $x\lessdot y$, then $|y|=|x|+1$, hence
\[
a(y)-a(x)=1.
\]
The number of covers from rank $k$ to rank $k+1$ is
\[
\binom{n}{k}(n-k)=\binom{n}{k+1}(k+1).
\]
Therefore
\[
\beta_k=
\frac{1}{2\sqrt{\binom{n}{k}\binom{n}{k+1}}}\binom{n}{k}(n-k)
=\frac12\sqrt{(k+1)(n-k)}.
\]
\end{proof}

\begin{remark}
These coefficients agree with the off-diagonal Jacobi parameters of the symmetric Krawtchouk polynomials with $p=\frac{1}{2}$: see \cite{Chihara} or \cite[Chapter 9]{Koekoek}. More precisely, the standard Krawtchouk Jacobi matrix, corresponding to multiplication by the coordinate $x$, has the same off-diagonal coefficients and constant diagonal $n/2$. Thus the Boolean-lattice Hamiltonian recovers the \emph{centered} Krawtchouk Jacobi matrix. In particular, the associated orthogonal polynomials are the Krawtchouk polynomials up to an affine change of variable.
\end{remark}

For later reference we record the first few determinant polynomials.

\begin{example}
For $\BB_1$ one gets
\[
J=
\begin{pmatrix}
0 & \frac12\\
\frac12 & 0
\end{pmatrix},
\qquad
\det(I-tJ)=1-\frac14 t^2.
\]
For $\BB_2$ one gets
\[
\beta_0=\beta_1=\frac{\sqrt{2}}{2},
\qquad
\det(I-tJ)=1-t^2.
\]
For $\BB_3$ one gets
\[
\beta_0=\frac{\sqrt{3}}{2},\qquad \beta_1=1,\qquad \beta_2=\frac{\sqrt{3}}{2},
\]
hence
\[
\det(I-tJ)=1-\frac52 t^2+\frac{9}{16}t^4.
\]
\end{example}

Equivalently, if $D^{(n)}_k(t)$ denotes the determinant polynomial for the leading principal
$(k+1)\times (k+1)$ minor, then
\[
D^{(n)}_{-1}(t)=1,\qquad D^{(n)}_0(t)=1,
\]
and
\[
D^{(n)}_{k+1}(t)=D^{(n)}_k(t)-\frac{(k+1)(n-k)}{4}t^2 D^{(n)}_{k-1}(t),
\qquad 0\le k\le n-1,
\]
with
\[
G_{\BB_n}(t)=\frac{D^{(n)}_{n-1}(t)}{D^{(n)}_n(t)}.
\]

There is also a closed spectral-sum formula.  Since $J_{\BB_n}$ is the radial compression of one half of the hypercube adjacency operator, its eigenvalues are
\[
\lambda_j=\frac{n}{2}-j,
\qquad 0\le j\le n,
\]
and the vacuum spectral weights are the binomial weights $2^{-n}\binom{n}{j}$.  Thus
\[
G_{\BB_n}(t)
=
\sum_{j=0}^{n}
\frac{2^{-n}\binom{n}{j}}{1-\left(\frac{n}{2}-j\right)t}.
\]

\subsection{Projective geometries and $q$-Hahn-type coefficients}

Let $L=\PG(r-1,q)$ denote the lattice of linear subspaces of $\bb F_q^r$, ordered by inclusion.

\begin{prop}
For $\PG(r-1,q)$, the radial Jacobi coefficients are
\[
\beta_k=\frac{q^k}{2}\sqrt{[k+1]_q\,[r-k]_q},
\qquad 0\le k\le r-1,
\]
where
\[
[m]_q:=\frac{q^m-1}{q-1}.
\]
\end{prop}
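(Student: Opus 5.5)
We will apply Theorem~\ref{thm:beta-formula} directly; the whole task is to evaluate each ingredient for $L=\PG(r-1,q)$. In this lattice the rank-$k$ elements are the $k$-dimensional subspaces of $\bb F_q^r$, and the atoms are the lines. So the data needed are
\[
n_k=\binom{r}{k}_q,\qquad a(x)=[k]_q \text{ whenever } \dim x=k.
\]
The second fact holds because a $k$-dimensional space contains $(q^k-1)/(q-1)$ one-dimensional subspaces.

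For a cover $x\lessdot y$ with $\dim x=k$ and $\dim y=k+1$, the contribution is
\[
a(y)-a(x)=[k+1]_q-[k]_q=q^k ,
\]
and this does not depend on the particular pair. The number of covers out of rank $k$ is $n_k$ times the number of $(k+1)$-spaces containing a fixed $k$-space $x$. These correspond to lines in $\bb F_q^r/x$, so each $x$ lies in $[r-k]_q$ of them. Hence the sum in Theorem~\ref{thm:beta-formula} equals
\[
q^k\binom{r}{k}_q[r-k]_q .
\]

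Substituting gives
\[
\beta_k=\frac{q^k}{2}\,[r-k]_q\sqrt{\frac{n_k}{n_{k+1}}} .
\]
Next we use the $q$-binomial identity
\[
\binom{r}{k}_q[r-k]_q=\binom{r}{k+1}_q[k+1]_q .
\]
This can be proved by double counting flags $x\lessdot y$ from the $y$ side, since each $(k+1)$-space has $[k+1]_q$ hyperplanes. The identity gives $n_k/n_{k+1}=[k+1]_q/[r-k]_q$, and therefore
\[
\beta_k=\frac{q^k}{2}\sqrt{[k+1]_q[r-k]_q}.
\]

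The only step that needs real care is the double count of flags, which supplies the ratio $n_k/n_{k+1}$. It is routine once phrased via duality (hyperplanes of $y$ correspond to lines in its dual). As a sanity check, setting $q=1$ formally recovers Proposition~\ref{prop:boolean-beta}.
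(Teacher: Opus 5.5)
Your proposal is correct and is essentially the paper's argument. Both apply Theorem~\ref{thm:beta-formula} with $n_k=\binom{r}{k}_q$, the per-cover atom count $q^k$, and the up/down neighbor counts $[r-k]_q$ and $[k+1]_q$; your explicit double count $\binom{r}{k}_q[r-k]_q=\binom{r}{k+1}_q[k+1]_q$ just spells out the ``combining these'' step that the paper leaves implicit.
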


\begin{proof}
We refer to \cite[Section 1.4]{Dembowski} or \cite{Hirschfeld} for standard facts about the dimensions of the rank subsets of projective geometries. The number of $k$-dimensional subspaces of $\mathbb{F}_q^r$ is given by the Gaussian binomial coefficient,
\[
N_k(r,q)=\binom{r}{k}_q,
\]
and in particular the number of points (i.e.\ $1$-dimensional subspaces) is
\[
N_1(r,q)=\frac{q^r-1}{q-1} = [r]_q.
\]

We compute the Jacobi coefficients associated to the Hamiltonian in the radial basis indexed by rank. Let $x$ be a $k$-dimensional subspace.

\medskip

\noindent\emph{Upward neighbors.}
The number of $(k+1)$-dimensional subspaces containing $x$ is
\[
a_k = [r-k]_q,
\]
since such subspaces correspond to $1$-dimensional subspaces of the quotient $\mathbb F_q^r/x$.

\medskip

\noindent
\emph{Downward neighbors.}
The number of $k$-dimensional subspaces contained in a fixed $(k+1)$-dimensional subspace $y$ is
\[
b_{k+1} = [k+1]_q.
\]

\medskip

\noindent
\emph{Atom multiplicity.}
For a cover $x<y$, the number of atoms $a$ such that $a\diamond x=y$ is the number of points of $y$ not contained in $x$. Using the point count above,
\[
|y\setminus x|
=
N_1(k+1,q)-N_1(k,q)
=
q^k.
\]

\medskip

Combining these, the radial coefficient is
\[
\beta_k
=
\frac{q^k}{2}\sqrt{[k+1]_q\,[r-k]_q}. \qedhere
\]
\end{proof}

\begin{remark}
This is the $q$-analogue of Proposition \ref{prop:boolean-beta}.  In other words, projective geometries produce Jacobi matrices which are consistent with the recurrence coefficients of $q$-Hahn-type orthogonal polynomials \cite[Chapter 14]{Koekoek}. We do not attempt to identify the precise normalization here.
\end{remark}

For $L=\PG(r-1,q)$ the radial subspace is invariant under the Hamiltonian, so the vacuum resolvent of $H$ agrees with the Jacobi resolvent of the compressed matrix.
Therefore, if $D^{(r,q)}_k(t)$ denotes the determinant polynomial of the leading principal
minor of size $(k+1)\times (k+1)$, then we have
\[
D^{(r,q)}_{-1}(t)=1,\qquad D^{(r,q)}_0(t)=1,
\]
and
\[
D^{(r,q)}_{k+1}(t)
=
D^{(r,q)}_k(t)-\frac{q^{2k}}{4}[k+1]_q[r-k]_q\,t^2\,D^{(r,q)}_{k-1}(t),
\qquad 0\le k\le r-1,
\]
with
\[
G_{\PG(r-1,q)}(t)=\frac{D^{(r,q)}_{r-1}(t)}{D^{(r,q)}_{r}(t)}.
\]


\begin{remark}
    Let $L=\AG(r,q)$ denote the lattice of affine subspaces of $\mathbb{F}_q^r$, ordered by inclusion, with a bottom element adjoined. We recall that affine $k$-flats are cosets of $k$-dimensional linear subspaces. The number of such flats is
    \[
    M_k(r,q)=q^{r-k}N_k(r,q),
    \]
    where $N_k(r,q)$ is the Gaussian binomial coefficient; see again \cite[Section 1.4]{Dembowski} or \cite{Hirschfeld}.

    Although this lattice is not geometric in the matroidal sense, it is finite, atomic, and semimodular, so the preceding construction applies. A computation analogous to the projective case leads to the radial coefficients
    \[
    \beta_k
    =
    \frac{(q-1)q^{k-1}}{2}\sqrt{q\,[k]_q\,[r-k+1]_q}
    =
    \frac12(q-1)q^{k-\frac12}\sqrt{[k]_q\,[r-k+1]_q}.
    \]
    These coefficients exhibit features reminiscent of the $q$-Hahn family, although we do not pursue a precise identification here.
\end{remark}


\section{Products of Lattices}

A key structural property of the Hamiltonian is its behavior under products of geometric lattices. 

\begin{prop}\label{prop:product}
Let $L = L' \times L''$ be a product of finite geometric lattices.
Then there is a canonical identification
\[
\R[L] \cong \R[L'] \otimes \R[L'']
\]
under which the Hamiltonian decomposes as a Kronecker sum
\[
H_L = H_{L'} \otimes I + I \otimes H_{L''}.
\]
\end{prop}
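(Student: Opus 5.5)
The identification is $e_{(x',x'')}\mapsto e_{x'}\otimes e_{x''}$. It is an isometry because both lattice bases are orthonormal and the tensor basis is orthonormal. The plan is to prove the stronger statement that each creation operator factors, and then sum.

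First I record the lattice facts for $L=L'\times L''$. The bottom element is $(\zero',\zero'')$, and meets and joins are computed componentwise. The atoms are exactly the pairs $(a',\zero'')$ with $a'\in\At(L')$ and $(\zero',a'')$ with $a''\in\At(L'')$, since an element covering $(\zero',\zero'')$ must be minimal nonzero in one coordinate and zero in the other. Hence
\[
\At(L)=\{(a',\zero''):a'\in\At(L')\}\sqcup\{(\zero',a''):a''\in\At(L'')\}.
\]

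Next I compute $L_{(a',\zero'')}$ on a basis vector $e_{(x',x'')}$. We have
\[
(a',\zero'')\wedge(x',x'')=(a'\wedge x',\,\zero''),
\]
which is the bottom element iff $a'\wedge x'=\zero'$. When it is, the join is $(a'\vee x',\,x'')$. Therefore
\[
L_{(a',\zero'')}(e_{x'}\otimes e_{x''})=L'_{a'}(e_{x'})\otimes e_{x''},
\]
so $L_{(a',\zero'')}=L'_{a'}\otimes I$. The same computation gives $L_{(\zero',a'')}=I\otimes L''_{a''}$.

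Transposition respects tensor products, $(T\otimes I)^t=T^t\otimes I$, because the identification is orthogonal. Hence $H_{(a',\zero'')}=H'_{a'}\otimes I$ and $H_{(\zero',a'')}=I\otimes H''_{a''}$. Summing over the disjoint atom decomposition above yields
\[
H_L=H_{L'}\otimes I+I\otimes H_{L''}.
\]

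The only step needing care is the description of the atoms of the product, together with the claim that the product of geometric lattices is geometric, so that $H_L$ is defined in the paper's setting. Geometricity is standard: the product corresponds to the direct sum of matroids. For the atom description, any $(x',x'')$ with both coordinates nonzero lies strictly above $(x',\zero'')$, so it is not an atom.
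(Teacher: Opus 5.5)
Your proposal is correct and takes essentially the same approach as the paper. It identifies $e_{(x',x'')}$ with $e_{x'}\otimes e_{x''}$, lists the atoms of the product as $(a',\zero'')$ and $(\zero',a'')$, and derives $L_{(a',\zero'')}=L'_{a'}\otimes I$ and $L_{(\zero',a'')}=I\otimes L''_{a''}$ from the componentwise meet and join, before taking self-adjoint parts and summing. Your added details (why mixed elements are not atoms, why transposition commutes with the tensor identification, and why the product is geometric) spell out points the paper leaves implicit.
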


\begin{proof}
The atoms of $L' \times L''$ are precisely the elements
$(a, \zero'')$ for $a \in \At(L')$ and $(\zero', b)$ for
$b \in \At(L'')$.  Under the identification
\[
e_{(x',x'')} \leftrightarrow e_{x'} \otimes e_{x''},
\]
we verify from the definition of the diamond product that for
$x = (x', x'') \in L$,
\[
(a,\zero'') \diamond (x',x'')
=
\begin{cases}
(a \vee x',\, x''), & a \wedge x' = \zero',\\
0, & a \wedge x' \neq \zero'.
\end{cases}
\]
Hence $L_{(a,\zero'')} = L_a \otimes I$ and similarly
$L_{(\zero',b)} = I \otimes L_b$.  Taking self-adjoint parts gives
\[
H_{(a,\zero'')} = H_a \otimes I,
\qquad
H_{(\zero',b)} = I \otimes H_b.
\]
Summing over all atoms of $L$ yields
\[
H_L = H_{L'} \otimes I + I \otimes H_{L''}.
\qedhere
\]
\end{proof}

\begin{cor}[Shuffle formula]\label{cor:shuffle}
Let $L = L' \times L''$ and let $x = (x', x'')$, $y = (y', y'')$ with
$x \leq y$.  Set
\[
d_1 = \rk(y') - \rk(x'),
\qquad
d_2 = \rk(y'') - \rk(x''),
\qquad
d = d_1 + d_2.
\]
Then
\[
\ip{e_x}{H_L^d e_y}
=
\binom{d}{d_1}
\ip{e_{x'}}{H_{L'}^{d_1} e_{y'}}
\ip{e_{x''}}{H_{L''}^{d_2} e_{y''}}.
\]
\end{cor}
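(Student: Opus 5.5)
The plan is to expand $H_L^d$ binomially, using Proposition~\ref{prop:product}, and then discard every term whose rank changes cannot reach from $y$ to $x$. Write $A = H_{L'}\otimes I$ and $B = I\otimes H_{L''}$. These commute, since $(H_{L'}\otimes I)(I\otimes H_{L''}) = H_{L'}\otimes H_{L''} = (I\otimes H_{L''})(H_{L'}\otimes I)$. The binomial theorem therefore applies:
\[
H_L^d=(A+B)^d=\sum_{j=0}^{d}\binom{d}{j}A^jB^{d-j}=\sum_{j=0}^d\binom{d}{j}\,H_{L'}^j\otimes H_{L''}^{d-j}.
\]
Pairing with $e_x=e_{x'}\otimes e_{x''}$ and $e_y=e_{y'}\otimes e_{y''}$ gives
\[
\ip{e_x}{H_L^d e_y}=\sum_{j=0}^d\binom{d}{j}\ip{e_{x'}}{H_{L'}^{j}e_{y'}}\ip{e_{x''}}{H_{L''}^{d-j}e_{y''}}.
\]

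The main step is to show that only $j=d_1$ survives. I will use the rank-grading result, the Corollary that $H(V_k)\subseteq V_{k-1}\oplus V_{k+1}$, which holds in each factor lattice. Applying $H_{L'}^j$ to $e_{y'}$ produces a vector supported on elements whose rank differs from $\rk(y')$ by at most $j$, with the same parity as $j$. So $\ip{e_{x'}}{H_{L'}^j e_{y'}}=0$ unless $j\ge |\rk(y')-\rk(x')|=d_1$. Note that $d_1\ge 0$ because $x\le y$ componentwise, so $x'\le y'$. In the same way, the second factor vanishes unless $d-j\ge d_2$, that is, unless $j\le d_1$. Together these force $j=d_1$, and the surviving term is exactly the stated formula.

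The delicate point is the parity/distance bound, but it is straightforward: I will prove by induction on $m$ that $H^m V_k\subseteq\bigoplus_{|i-k|\le m} V_i$, which follows at once from the block-tridiagonal structure. Nothing else is needed. The hypothesis $x\le y$ is used only to identify $d_1$ and $d_2$ with absolute rank differences, and the argument works equally well with $d_i=|\rk(y^{(i)})-\rk(x^{(i)})|$.
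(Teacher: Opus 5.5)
Your proposal is correct and follows essentially the same route as the paper: expand $(H_{L'}\otimes I + I\otimes H_{L''})^d$ binomially, pair with the product basis vectors, and use the rank grading of $H$ to discard every term except $j=d_1$. Your two factorwise bounds, $j\ge d_1$ and $d-j\ge d_2$, are a more careful version of the paper's observation that every step of a length-$d$ path from $x$ to $y$ must raise rank, with exactly $d_1$ of those steps taken in $L'$.
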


\begin{proof}
From the binomial expansion of $H_L^d = (H_{L'}\otimes I + I\otimes H_{L''})^d$,
\[
H_L^d = \sum_{k=0}^d \binom{d}{k}
(H_{L'})^k \otimes (H_{L''})^{d-k}.
\]
Since $H_L$ changes rank by exactly one at each step, the coefficient
$\ip{e_x}{H_L^d e_y}$ is nonzero only when $d = \rk(y) - \rk(x)$,
which holds by assumption.  Any word of length $d$ that
moves from $x$ to $y$ must raise rank at every step; the number
of steps assigned to the $L'$-factor must equal $d_1$ and to the
$L''$-factor $d_2$.  Hence only the $k = d_1$ term contributes.
\end{proof}

\begin{remark}
    The binomial coefficient $\binom{d}{d_1}$ counts the interleavings of $d_1$ rank-raising steps in $L'$ with $d_2$ rank-raising steps in $L''$. Thus $\ip{e_x}{H_L^d e_y}$ counts ordered minimal atomic decompositions of the interval $[x,y]$ in $L$, weighted by $2^{-d}$, and the product formula reflects the fact that every such decomposition is a shuffle of a decomposition in $[x',y']$ with one in $[x'',y'']$.
\end{remark}

Recall that the \emph{vacuum spectral measure} of a self-adjoint operator $T$ on $\R[L]$ is the compactly supported probability measure $\mu$ determined by
\[
\int t^k \, d\mu(t) = \ip{e_{\zero}}{T^k e_{\zero}}, \qquad k \ge 0.
\]

\begin{prop}\label{thm:conv}
Let $L = L' \times L''$ be a product of finite geometric lattices, and let $\mu_L$, $\mu_{L'}$, $\mu_{L''}$ denote the vacuum spectral measures of $H_L$, $H_{L'}$, $H_{L''}$ respectively.  Then
\[
\mu_L = \mu_{L'} \ast \mu_{L''},
\]
where $\ast$ denotes the convolution of measures.
\end{prop}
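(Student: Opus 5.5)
The plan is to compare moments, using Proposition~\ref{prop:product}. The vacuum of $L$ is $e_{\zero}=e_{\zero'}\otimes e_{\zero''}$, because the bottom of $L'\times L''$ is $(\zero',\zero'')$. Proposition~\ref{prop:product} gives $H_L=H_{L'}\otimes I+I\otimes H_{L''}$. The two summands commute, since $(H_{L'}\otimes I)(I\otimes H_{L''})=H_{L'}\otimes H_{L''}=(I\otimes H_{L''})(H_{L'}\otimes I)$. Hence the binomial theorem applies, exactly as in the proof of Corollary~\ref{cor:shuffle}:
\[
H_L^k=\sum_{j=0}^k\binom{k}{j}H_{L'}^j\otimes H_{L''}^{k-j}.
\]
Taking the vacuum expectation and using that the inner product on $\R[L']\otimes\R[L'']$ factors on simple tensors, we get
\[
\ip{e_{\zero}}{H_L^k e_{\zero}}=\sum_{j=0}^k\binom{k}{j}\ip{e_{\zero'}}{H_{L'}^j e_{\zero'}}\ip{e_{\zero''}}{H_{L''}^{k-j}e_{\zero''}}.
\]

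Next I would identify the right-hand side as the $k$-th moment of $\mu_{L'}\ast\mu_{L''}$. By definition of convolution,
\[
\int t^k\,d(\mu_{L'}\ast\mu_{L''})(t)=\iint (s+u)^k\,d\mu_{L'}(s)\,d\mu_{L''}(u).
\]
Expanding $(s+u)^k$ binomially and applying Fubini, which is harmless because both measures are finitely supported, yields $\sum_j\binom{k}{j}m_j(\mu_{L'})\,m_{k-j}(\mu_{L''})$. This matches the expression above term by term. Thus $\mu_L$ and $\mu_{L'}\ast\mu_{L''}$ have identical moments of all orders.

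Finally, I would invoke uniqueness. Both measures are compactly supported; indeed they are finitely supported, on spectra of finite symmetric matrices and on sums of two such spectra. A compactly supported measure is determined by its moments, by Weierstrass density of polynomials in $C(K)$ on a compact set $K$ containing both supports. Therefore $\mu_L=\mu_{L'}\ast\mu_{L''}$.

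There is no real obstacle here. The only points needing care are checking that the two tensor summands commute, so that the binomial expansion is legitimate, and confirming that the vacuum vector factors as a simple tensor. An alternative route would be through the spectral theorem: $\mu_{L'}=\sum_i |\ip{e_{\zero'}}{v_i}|^2\delta_{\lambda_i}$, and the eigenvectors of a Kronecker sum are $v_i\otimes w_j$ with eigenvalues $\lambda_i+\nu_j$. This gives the convolution directly, and I would mention it as a cross-check.
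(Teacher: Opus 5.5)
Your proposal is correct and follows the same route as the paper. Both arguments use the Kronecker-sum decomposition $H_L=H_{L'}\otimes I+I\otimes H_{L''}$, the binomial expansion for the commuting summands, and the factorization of the product vacuum state to match the moments of $\mu_{L'}\ast\mu_{L''}$; you also spell out the moment-uniqueness step for compactly supported measures, which the paper leaves implicit.
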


\begin{proof}
The state $\varphi = \ip{e_{\zero'} \otimes e_{\zero''}}{\,\cdot\, e_{\zero'} \otimes e_{\zero''}}$
is a product vector state.  The state $\varphi$ is a product state, and the operators $H_{L'} \otimes I$ and $I \otimes H_{L''}$ commute and act on separate tensor factors. It follows that the moments factor as
\[
\varphi\!\left((H_{L'} \otimes I + I \otimes H_{L''})^k\right)
=
\sum_{j=0}^k \binom{k}{j}
\varphi'\!\bigl((H_{L'})^{j}\bigr)
\varphi''\!\bigl((H_{L''})^{k-j}\bigr),
\]
which is precisely the moment formula for the classical convolution $\mu_{L'}\ast \mu_{L''}$.
\end{proof}

\begin{remark}
    For the Boolean lattice $B_n \cong B_1^{\times n}$, the Hamiltonian is an $n$-fold tensor sum of a fixed $2 \times 2$ matrix. Hence the vacuum spectral measure is the $n$-fold classical convolution of the rank-one case, recovering the Krawtchouk family from a single $2\times 2$ model.
\end{remark}

\begin{remark}
    The product formula of Proposition \ref{thm:conv} shows that the vacuum spectral measure behaves functorially with respect to lattice products. It is natural to ask whether there is a corresponding local structure at the level of intervals. One might expect that suitable contributions from intervals $[a,b]$ play a role analogous to cumulants in the sense of Speicher’s combinatorial approach to free probability \cite{Speicher}. We do not pursue this here, but it would be interesting to make this connection precise.
\end{remark}

\section{Concluding Remarks}

The construction developed here is elementary but captures a surprising vantage on the combinatorial structure of the lattice,  providing a canonical and computable bridge between finite geometric lattices and finite Jacobi matrices. There are several directions of further inquiry, including a development of a commutator calculus for the operators $L_a$ for which richer uses of ideas from noncommutative analysis may have purchase. We end with several natural questions.

\begin{question}
For which geometric lattices is the radial subspace actually invariant under the Hamiltonian?
\end{question}

\begin{question}
Which lattice families produce classical hypergeometric Jacobi matrices?
\end{question}

\begin{question}
Can the full operator $H$, beyond its radial compression, be used to extract further lattice invariants or refined combinatorial data?
\end{question}

\section*{Acknowledgments}
The author thanks Jos\'e Contreras Mantilla and Zhiyuan Yang for useful comments. 
The author used OpenAI’s ChatGPT for editorial assistance and for exploring examples and formulations during the development of this work. All mathematical content, results, and conclusions are solely those of the author.

\end{document}